\DeclareMathOperator{\dist}{dist} 
\DeclareMathOperator{\StSh}{StSh}
\DeclareMathOperator{\OrientSh}{OrientSh}
\DeclareMathOperator{\Rep}{Rep}
\DeclareMathOperator{\Cyl}{Cyl}\DeclareMathOperator{\Sp}{Sp}
\newcommand{\RR}{\ensuremath{\mathds{R}}}
\newcommand{\Cc}{\mbox{${\bf C}$}}
\newcommand{\Ss}{\mbox{$\textbf{S}$}}
\newcommand{\ep}{\varepsilon}
\newcommand{\dd}{d}
\newcommand{\al}{\alpha}
\newcommand{\Ws}{\mbox{$W^s$}}
\newcommand{\Wu}{\mbox{$W^u$}}
\newcommand{\Cone}{\mbox{$\Cc^1$}}
\newcommand{\sref}[1]{(\ref{#1})}
\newtheorem{thm}{Theorem}
\newtheorem{proposition}{Proposition}
\newtheorem{lem}{Lemma}
\theoremstyle{definition}
\newtheorem{remark}{Remark}
\begin{document}

\title{An example of a vector field with the oriented shadowing property}

\author{Sergey Tikhomirov\footnote{Max Plank Institute for Mathematics in the Sciences, Inselstrasse 22, Leipzig, 04103, Germany; Chebyshev Laboratory, Saint-Petersburg State Univeristy, 14th line of Vasilievsky island, 29B, Saint-Petersburg, 199178, Russia;
sergey.tikhomirov@gmail.com} \footnote{partially supported by Chebyshev Laboratory (Department of Mathematics and Mechanics, St. Petersburg State University)  under RF Government grant 11.G34.31.0026, JSC ``Gazprom neft'', by the St. Petersburg State University in the framework of project 6.38.223.2014 and  by the German-Russian
Interdisciplinary Science Center (G-RISC) funded by the German Federal
Foreign Office via the German Academic Exchange Service (DAAD)} }

\date{}

\maketitle

\begin{abstract}
We consider shadowing properties for vector fields corresponding to different type of reparametrisations. We give an example of a vector field which has the oriented shadowing properties, but does not have the standard shadowing property.
\end{abstract}
\textbf{keywords:} shadowing, vector field, reparametrization, structural stability.\newline
\textbf{MSC Classification:} 37C50, 37C10

\section{Introduction}

The theory of shadowing of approximate trajectories (pseudotrajectories) of dynamical systems is now a well developed part of the global theory of dynamical systems (see, for example, the monographs \cite{PalmBook, PilBook}). Often shadowing is also called pseudo orbit tracing property (POTP).

This theory is closely related to the classical theory of structural stability (the basic definitions of structural stability and $\Omega$-stability for diffeomorphisms and vector fields can be found, for example, in the monograph \cite{PilSSBook, Katok}). It is well known that diffeomorphisms and vector fields have shadowing property in a neighborhood of a hyperbolic set \cite{Ano, Bow} and structurally stable systems (satisfying Axioma A and the strong transversality condition) have shadowing property on the
whole manifold \cite{Rob, Saw, PilFlow}.

Let us note that the main difference between the shadowing problem for vector fields and the similar problem for diffeomorphisms is related to the necessity of reparametrization of shadowing trajectories in the former case. Several definitions of shadowing property are possible \cite{Kom, Tho} according to the choice of reparametrization of shadowing orbit. In the modern theory of shadowing \cite{PilRev} the most important types of reparametrisations corresponds to standard and oriented shadowing properties (strong POTP and normal POTP in the sense of \cite{Kom}).

Komuro proved that oriented and standard shadowing properties are equivalent for vector fields without fixed points \cite{Kom}. In the same paper it was posed a question if those two notions are different in general \cite[Remark 5.1]{Kom}. Up to our knowledge the answer to this question is still unknown.

Recently importance of this question appears again, during characterisation of vector fields with the $C^1$-robust shadowing properties \cite{LeeSak,TikhVest,PilTikhDAN, PilTikh, GanLiTikh}. In particular in a work by the author \cite{PilTikh} it was constructed an example of a not structurally
stably vector field with the $C^1$-robust oriented shadowing property. It looks like the statement is correct for the standard shadowing property, however the proof does not work in this case.

In the present paper we give an example of a vector field on a $4$-dimensional manifold which has the oriented shadowing property and do not have the standard shadowing property. An example is a vector field with a nontransverse intersection of stable and unstable manifolds of two fixed points of a very special structure in their neighborhoods.

The paper is organised as the following: in Section \ref{secDef} we give all necessarily definitions and formulate the main result; in Section \ref{secaux} we construct a special two-dimensional vector field which plays a central role in the construction of the example; in Section \ref{secconstr} we make the construction of the example; in Section \ref{secorient} we prove that example has the oriented shadowing property; in Section \ref{secst} we prove that example does not have the standard shadowing property; in the Appendix we prove properties of above-mentioned two dimensional vector field.

\section{Definitions and main results} \label{secDef}

Let $M$ be a smooth compact manifold with Rimannian metric $\dist$ and let $X$ be a vector field on $M$ of class $C^1$, and $\phi(t, x)$ flow generated by it.

For $a> 0$ and $x \in M$ denote by $B(a, x)$ an open ball centered at $x$.

For $d> 0$ we say that (not necessarily continuous) map $g: \RR \to M$ is a $d$-pseudotrajectory if holds the inequalities
\begin{equation}\label{eqpst}\notag
\dist(g(t+ \tau), \phi(\tau, g(t))) < d, \quad t \in \RR, |\tau| < 1.
\end{equation}

An increasing homeomorphism $h$ of the real line is called a reparametrisation. Denote set of all reparametrisations by $\Rep$. For $a> 0$ we say that a reparametrisation $h$ belongs to class $\Rep(a)$ if
$$
\left| \frac{h(t_1)- h(t_2)}{t_1 - t_2} - 1 \right| < a, \quad t_1, t_2 \in \RR, \; t_1 \ne t_2.
$$

We say that a vector field has \textit{the standard shadowing property} if for any $\ep > 0$ there exists $d > 0$ such that for any $d$-pseudotrajectory $g$ there exists $x_0 \in M$ and a reparametrisation $h \in \Rep(\ep)$ such that
\begin{equation}\label{eqsh}
\dist(g(t), \phi(h(t), x_0) < \ep, \quad t \in \RR.
\end{equation}
In this case we say that $g$ is $\ep$-standard shadowed by  $\phi(h(\cdot), x_0)$. Denote the set of all vector fields with the standard shadowing property by $\StSh$.

We say that a vector field has \textit{the oriented shadowing property} if for any $\ep > 0$ there exists $d > 0$ such that for any $d$-pseudotrajectory $g$ there exists $x_0 \in M$ and a reparametrisation $h \in \Rep$ such that \sref{eqsh} holds.
In this case we say that $g$ is $\ep$-oriented shadowed by $\phi(h(\cdot), x_0)$. Denote the set of all vector fields with the oriented shadowing property by $\OrientSh$.

Clearly $\StSh \subset \OrientSh$.

It is important to mention that structurally stable vector fields have standard and oriented shadowing properties. Moreover the following generalisation is possible.

We say that a vector field has the Lipschitz shadowing property if there exists $L, d_0 > 0$ such that for any $d \in (0, d_0)$ and  $d$-pseudotrajectory $g$ there exists $x_0 \in M$ and a reparametrisation $h \in \Rep(Ld)$ such that inequalities \sref{eqsh} hold for $\ep = Ld$.

\begin{thm}\label{thmSS}\cite{PilFlow}
Structurally stable vector fields satisfy the Lipschitz shadowing property.
\end{thm}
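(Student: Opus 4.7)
The plan is to combine the flow version of Mañé's characterization, due to Hayashi, with the classical hyperbolic shadowing technique, carefully tracking Lipschitz dependence of all errors on $d$ and, crucially, on the reparametrization. By Hayashi's theorem, structural stability of $X$ implies that $X$ satisfies Axiom A and the strong transversality condition, so the nonwandering set decomposes into finitely many hyperbolic basic sets $\Lambda_1,\ldots,\Lambda_N$, and for every $x\in M$ the (flow) stable and unstable manifolds meet transversally. Given a $d$-pseudotrajectory $g$, I would first argue that $g$ decomposes into long segments contained in small neighborhoods of the $\Lambda_i$ (joined by transition segments of uniformly bounded length that follow heteroclinic orbits in $W^u(\Lambda_i)\cap W^s(\Lambda_j)$). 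This decomposition is uniform in $d$ provided $d$ is small.

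Next, on a small neighborhood of each hyperbolic basic set $\Lambda_i$, I would invoke the flow analogue of the Anosov--Bowen shadowing lemma. Using local product structure and a contraction mapping argument on transversal sections to the flow, one obtains, for the portion of $g$ lying near $\Lambda_i$, a true orbit and a reparametrization satisfying \sref{eqsh} with $\ep$ and the reparametrization constant both bounded by $L_1 d$. The Lipschitz constant $L_1$ comes from the fact that the hyperbolic splitting, local stable/unstable holonomies, and the Poincaré sections all have Lipschitz inverses with constants depending only on the hyperbolicity exponents and angles of the splitting over $\Lambda_i$, which are uniform on the compact set $\Lambda_i$.

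To glue the local shadowings across transitions, I would use strong transversality. If $g$ moves from a neighborhood of $\Lambda_i$ to a neighborhood of $\Lambda_j$ along a heteroclinic orbit $\gamma\subset W^u(\Lambda_i)\cap W^s(\Lambda_j)$, then transversality of $W^u(\Lambda_i)$ and $W^s(\Lambda_j)$ (together with the flow direction) at points of $\gamma$ means that a small displacement on one side can be decomposed uniquely, with Lipschitz constants, as a sum of a shift along $W^u(\Lambda_i)$, a shift along $W^s(\Lambda_j)$, and a time shift along the orbit. Compactness of $M$ and finiteness of the set of basic sets give uniform lower bounds on the relevant angles, hence a uniform Lipschitz constant $L_2$ for the matching. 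An inductive construction pairing local shadowings along the whole bi-infinite sequence then yields a single true orbit $\phi(h(\cdot),x_0)$ with $h\in\Rep(Ld)$ shadowing $g$ within $Ld$, for $L=L(L_1,L_2)$.

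The main obstacle, and the point where flow shadowing departs from the diffeomorphism case, is the reparametrization near hyperbolic \emph{equilibria}. At an equilibrium $p\in\Lambda_i$ the flow speed vanishes, so a spatial error of size $d$ can correspond to an unbounded time error in the naive sense, and the standard transversal-section argument breaks down on the orbit of $p$ itself. The delicate step is to show that, despite the pseudotrajectory possibly spending long intervals inside a small ball around $p$, the cumulative distortion of $h$ over those intervals is still controlled by $d$ times the interval length. This requires using the hyperbolic linearization near $p$ to estimate, in terms of the initial position of the pseudotrajectory relative to $W^s(p)$ and $W^u(p)$, both the sojourn time in $B(\delta,p)$ and the rate at which the true orbit and $g$ separate, and then to define $h$ piecewise so that $|h'-1|=O(d)$ uniformly. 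Once this local estimate near equilibria is in place, the global construction assembles into the required $\Rep(Ld)$ reparametrization, completing the proof.
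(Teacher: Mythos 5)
You should first note that the paper does not prove Theorem~\ref{thmSS} at all: it is quoted from \cite{PilFlow}, and the only information the present paper records about that proof is Remark~\ref{remNoRep} (no reparametrization is needed in the absence of closed orbits). Measured against that, your sketch is the classical geometric route (Axiom~A plus strong transversality, local shadowing near basic sets, gluing along heteroclinic transitions), which is fine as a qualitative scheme, but as a proof of the \emph{Lipschitz} statement it has genuine gaps precisely at the steps that constitute the content of \cite{PilFlow}. The gluing is only asserted: strong transversality gives $T_xW^u(\Lambda_i)+T_xW^s(\Lambda_j)=T_xM$, but the decomposition of a small displacement is in general \emph{not} unique (the intersection may have dimension larger than one), and the uniform constant $L_2$ does not follow from ``compactness of $M$ and finiteness of the basic sets'' --- the sets $W^u(\Lambda_i)\cap W^s(\Lambda_j)$ are not compact and angles can degenerate, so you need a separate compactness argument for the bounded-time transition segments lying outside the chosen neighborhoods. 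More seriously, passing from the finitely many local shadowing orbits to a single true orbit is not a matter of ``pairing'': you need a shadowing statement with boundary conditions near each $\Lambda_i$ (an orbit segment that stays within $O(d)$ of the corresponding piece of $g$, enters along the unstable data produced by the previous transition, and exits into the stable data required by the next one), together with Lipschitz dependence of all these choices on $d$. That estimate is the heart of the theorem and is missing from the proposal; note that even in the very simple situation of this paper the author cannot prove the Lipschitz analogue of the reduction step (Remark~\ref{remProb}), which indicates how nontrivial this bookkeeping is.

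The second gap concerns the reparametrization, and here the difficulty is partly misidentified. Near a hyperbolic rest point the resolution is not a cleverly chosen $h$ with $|h'-1|=O(d)$ obtained from sojourn-time estimates; it is to take $h=\mathrm{id}$ there, since closeness to a slowly moving orbit requires no time alignment at all --- this is exactly what \cite{PilFlow} establishes (cf.\ Remark~\ref{remNoRep}), and any attempt to match hitting times inside $B(\delta,p)$, where the sojourn time is of order $|\ln d|$, risks destroying a pointwise $O(d)$ bound. The genuine sources of the $\Rep(Ld)$ requirement are the closed orbits inside basic sets (asymptotic phase drift, where $h-\mathrm{id}$ becomes unbounded while the difference-quotient bound must be kept) and the concatenation of the time changes of consecutive pieces so that the global bound $\bigl|\frac{h(t_1)-h(t_2)}{t_1-t_2}-1\bigr|<Ld$ survives the junctions; neither issue is addressed. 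So what you have is a plausible outline of a (geometric) strategy, but the quantitative steps that make the Lipschitz statement true --- uniform gluing constants and uniform control of $h$ --- are not supplied, and without them the argument does not yet constitute a proof.
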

\begin{remark} \label{remNoRep}
In fact, it is shown in \cite{PilFlow} that if a structurally stable
vector field does not have closed trajectories, then it has the Lipschitz shadowing property without
reparametrization of shadowing trajectories: there exists $L>0$ such
that if $g(t)$ is a $d$-pseudotrajectory with small $d$, then there
exists a point $x$ such that
$$
\dist(g(t),\phi(t,x))\leq Ld,\quad t\in\RR.
$$
\end{remark}
\begin{remark}\label{remConv}
It is worth to mention that recently it was proved that converse of Theorem \ref{thmSS} is correct \cite{PalmPilTikh}. Note that the proof strongly uses both conditions $h \in \Rep(Ld)$ and $\ep = Ld$.
\end{remark}

In the paper we prove
\begin{thm}\label{thmMain}
For $M = S^2 \times S^2$ there exists vector field $X \in \OrientSh \setminus \StSh$.
\end{thm}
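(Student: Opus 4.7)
The plan is to build, on $M=S^2\times S^2$, a $C^1$ vector field with exactly two hyperbolic singularities $p_1,p_2$ connected by a continuum of non-transverse heteroclinic orbits, and to arrange the local picture near $p_1,p_2$ so that pseudo-trajectories can always be oriented-shadowed using reparametrisations in $\Rep$, while being obstructed from standard-shadowing because the required reparametrisation cannot be kept $\ep$-close to the identity for $\ep$ small.

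For the construction I will use the auxiliary two-dimensional vector field $Y$ promised in Section \ref{secaux}: $Y$ has two hyperbolic saddles joined by a continuous one-parameter family of saddle connections of a prescribed geometry. On one $S^2$ factor I place a field modelled on $Y$; on the other factor I place a complementary hyperbolic structure (contracting on one side, expanding on the other, with ratios chosen so that the product of the two fields is hyperbolic away from $p_1,p_2$). Gluing through the rest of $S^2\times S^2$ in a Morse--Smale fashion produces $X\in C^1(M)$ whose only non-hyperbolic feature is the one-parameter family of non-transverse orbits in $W^u(p_1)\cap W^s(p_2)$ inherited from $Y$.

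To show $X\in\OrientSh$: given $\ep>0$ I choose $d\ll\ep$ and a $d$-pseudotrajectory $g$. Away from the neighborhoods $U_i$ of $p_i$ the field is uniformly hyperbolic and Morse--Smale, so the finite-time shadowing estimates implicit in Theorem \ref{thmSS} and Remark \ref{remNoRep} give controlled local shadows without reparametrisation. On the segments where $g$ traverses $U_1$ and $U_2$ through the heteroclinic region, I use the continuous family supplied by $Y$ to select a saddle connection whose entry/exit geometry matches that of $g$; the possibly very long sojourn of $g$ near $p_i$ is then absorbed by a reparametrisation $h$ that slows the shadowing orbit down near the singularity by an amount which, crucially, need not be $\ep$-small since $h$ only has to lie in $\Rep$. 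Concatenating the local shadows through the hyperbolic transitions, using continuous dependence of the heteroclinic parameter on the boundary data, yields a single oriented shadowing trajectory.

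To show $X\notin\StSh$: fix $\ep_0>0$ small and construct, for each sufficiently small $d$, a $d$-pseudotrajectory $g_d$ that performs one $d$-jump deep inside $U_1$ transverse to the heteroclinic family, so that the pre-jump and post-jump segments of $g_d$ lie near two distinct members of the family. The non-transverse geometry inherited from $Y$ is set up so that any true trajectory which is $\ep_0$-close to the pre-jump part of $g_d$ is forced, on the post-jump part, onto a heteroclinic whose passage time through $U_1$ differs from the passage time of $g_d$ by an amount $T_0=T_0(\ep_0)>0$ bounded below independently of $d$. For $h\in\Rep(\ep_0)$ one has $|h(t_2)-h(t_1)-(t_2-t_1)|<\ep_0|t_2-t_1|$, and on a uniformly bounded time interval this prevents $h$ from absorbing an $\Omega(1)$ timing mismatch once $\ep_0$ is small enough; hence no standard shadowing exists.

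The main obstacle will be the oriented-shadowing half: one has to check that the parameter of the saddle connection chosen during each local passage through $U_i$ can be updated continuously to match the pseudo-trajectory coming in from the hyperbolic region and the one going out, so that the local shadows glue together into a genuine orbit of $\phi$. The carefully engineered continuous family of saddle connections in the two-dimensional field $Y$, together with its quantitative entry/exit estimates promised in the Appendix, is precisely what makes this gluing procedure work.
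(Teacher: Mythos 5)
Your proposal diverges from the paper's construction, and the decisive step of your argument --- the proof that $X \notin \StSh$ --- contains a genuine gap. You claim that a $d$-jump inside $U_1$ forces a passage-time mismatch $T_0(\ep_0)>0$ which a reparametrisation $h\in\Rep(\ep_0)$ cannot absorb ``on a uniformly bounded time interval.'' But the relevant time interval is not uniformly bounded: near a hyperbolic rest point the time an orbit (or pseudotrajectory) spends in $U_1$ diverges as it approaches the stable/unstable manifolds, which is exactly the regime produced by letting $d\to 0$ near a heteroclinic connection. Since $h\in\Rep(\ep_0)$ only controls the \emph{relative} time distortion, it can shift absolute time by $\ep_0|t_2-t_1|$, hence absorb any fixed mismatch $T_0$ over a passage of length of order $T_0/\ep_0$. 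This is precisely why a pure timing obstruction cannot separate $\StSh$ from $\OrientSh$ (recall Komuro's equivalence away from rest points), and why the paper does not use one: instead it builds into the linear part at $p$ and $q$ a slow spiral with angular speed $b(r)=-1/\ln r$, so that (Lemma \ref{lem2d}(ii)) the total backward winding is infinite --- this is what makes \emph{oriented} shadowing possible via the cylinder-intersection Lemma \ref{lemCyl} --- while (Lemma \ref{lem2d}(iii)) any shadowing with $h\in\Rep(l)$ pins the \emph{argument} of the true orbit in the spiral plane to within $\pi/4$ of that of the pseudotrajectory. The pseudotrajectory in Section \ref{secst} jumps in the $+x_4$ direction at the $p$-end of $\al$ and in the $-x_4$ direction at the $q$-end, so a standard shadowing orbit would need $P_4x_p>0$ and $P_4x_q<0$, contradicting the coordinate-preserving transition (F5). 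Your proposal contains no mechanism of this phase/argument type, so the non-shadowing half does not go through; nor do you justify why the claimed $\Omega(1)$ mismatch would persist as $d\to0$.

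The construction and the oriented-shadowing half also do not match what the auxiliary field actually provides. The $Y$ of Section \ref{secaux} is a planar field with a \emph{single} rest point at the origin ($\dot r=ar$, $\dot\varphi=b(r)$), not ``two hyperbolic saddles joined by a one-parameter family of saddle connections,'' and the paper's example has four rest points ($p,q,s,u$) with a \emph{single} nontransverse heteroclinic trajectory $\al$, the attractor $s$ and repeller $u$ being needed so that the dynamics away from $\al$ is structurally stable (this is what lets Proposition \ref{prop2} reduce everything to pseudotrajectories of type Ps$(\delta)$). Your gluing of local shadows via a ``continuously updated'' choice within a continuum of heteroclinics is left unsubstantiated; the paper replaces it by a concrete argument: infinite backward winding of the spiral (Lemma \ref{lem2d}(ii)) plus Lemma \ref{lemCyl} produce a point $x_0$ whose orbit meets prescribed small cylinders at both ends of $\al$, and the unrestricted reparametrisation then yields oriented shadowing. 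So both halves of your argument would need the specific spiral mechanism, which is the actual content of the example.
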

It is interesting to understand if our example satisfy the Lipschitz oriented shadowing property (there exists $L, d_0 > 0$ such that for any $d < d_0$ and $d$-pseudotajectory $g(t)$ there exists $x_0$ and $h \in \Rep$ such that inequalities \sref{eqsh} hold for $\ep = Ld$). If this statement is correct it would justify that assumption $h \in \Rep(Ld)$  in Remark \ref{remConv} is essential. Unfortunately we cannot prove this (see Remark \ref{remProb}) and leave this out of the scope of the paper.

\section{Auxilarily Statements} \label{secaux}

Consider $a,l > 0$, $K > 3$ and a continuous function $b: [0, +\infty) \to \RR$, $b \in C^1(0, +\infty)$ defined as the following:
$$
b(r) =
\begin{cases}
0, & \quad r \in \{0\} \cup ((K-1)l, +\infty),\\
-\frac{1}{\ln r}, & \quad r \in (0, 2l),\\
b(r) \geq 0, & \quad r
\in [2l, (K-1)l].
\end{cases}
$$
Let $\psi(t, x)$ be a flow on $\RR^2$ generated by a vector field defined by the following formula
\begin{equation}\label{eq2dvf}\notag
Y(x) = \left(
            \begin{array}{cc}
              a & b(|x|) \\
              b(|x|) & a \\
            \end{array}
          \right)x,
\end{equation}
which generates the following system of differential equations in polar coordinates
\begin{equation}\label{eq2dpolar}\notag
\begin{cases}
\frac{\dd r}{\dd t} = ar,\\
\frac{\dd \varphi}{\dd t} = b(r).
\end{cases}
\end{equation}

For a point $x \in \RR^2 \setminus \{0\}$ we denote by
$\arg(x)$ the point $ \frac{x}{|x|} \in \Ss^1$. If a point $x \in \RR^2$
has polar coordinates $(r, \varphi)$, and~$r \ne 0$, we put $\arg(x)
= \varphi$.
\begin{lem}\label{lem2d}
\begin{itemize}
\item[(i)] For any $a, l > 0$, $K > 3$ vector field $Y$ is of class $C^1$.
\item[(ii)] For any $a, l > 0$, $K > 3$ and a point $x_0 \in \RR^2 \setminus {0}$, angle $\Theta$ and  $T_0 < 0$ there exists $t < T_0$ such that $\arg(\psi(t, x_0)) = \Theta$.
\item[(iii)] There exists $a, l > 0$, $K > 3$ such that the following condition holds. If for some points $x_0, x_1 \in \RR^2$, $|x_0| < l$, $|x_1| < 2l$ and reparametrization $h \in \Rep(l)$, holds inequalities
\begin{equation}\label{eq2dstsh}
\dist(\psi(h(t), x_1), \phi(t, x_0)) < l
\end{equation}
provided that $|\psi(h(t), x_1)|,|\phi(t, x_0)| < 1$. Then
$|arg(x_1) - arg(x_0)| < \pi/4$.

\end{itemize}
\end{lem}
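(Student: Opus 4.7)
For (i), $Y$ is plainly $C^1$ away from the origin. To verify differentiability at $0$, I compute the partials of $Y_1 = ax_1 + b(|x|)x_2$ and $Y_2 = b(|x|)x_1 + ax_2$ for $x \ne 0$. The only nontrivial terms are $b(|x|)\delta_{ij}$ and $b'(|x|)(x_i/|x|)x_j$. Since $b(r)\to 0$ and $rb'(r) = 1/\ln^2 r \to 0$ as $r\to 0^+$, while $|x_ix_j|/|x|^2$ is bounded, all partials extend continuously with value $a\delta_{ij}$ at the origin.

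For (ii), work in polar coordinates. From $\dot r = ar$, $r(t) = |x_0|e^{at}\to 0$ as $t\to-\infty$. For sufficiently negative $t$, $r(t) < 2l$, hence $\dot\varphi = -1/\ln r(t) > 0$, and a direct integration gives
\[
\varphi(t) - \varphi(t_*) \;=\; -\frac{1}{a}\ln\left|\frac{\ln r(t)}{\ln r(t_*)}\right|,
\]
which diverges to $-\infty$ as $t\to-\infty$. Therefore $\varphi(t)$ sweeps through every angle infinitely often in the past.

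The heart of the lemma is (iii). Writing $x_0,x_1$ in polar as $(\rho_0,\Phi_0),(\rho_1,\Phi_1)$, every $\psi$-trajectory is a spiral leaf $\{\varphi = F(r)+C\}$, where $F$ is an antiderivative of $b/(ar)$; on $(0,2l)$ one has $F(r) = -\frac{1}{a}\ln|\ln r| + \text{const}$, and $F$ is constant on $[(K-1)l,\infty)$ since $b\equiv 0$ there. I would proceed in three steps.

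\textbf{Step (a): angular equation.} Set $y := \psi(h(0),x_1)$. The trajectories of $x_0$ and of $y$ (the latter reparametrized by $\tilde h(t)=h(t)-h(0)$) remain within $l$ of each other inside the unit ball. In the non-rotating region $r > (K-1)l$ (which, by choosing $l < 1/K$, includes $r$ arbitrarily close to $1$), the angle along each trajectory equals its spiral-leaf constant $C=\arg(\cdot)-F(|\cdot|)$. Comparing at radii approaching $1$, where the angular tolerance tightens to $O(l)$ on $S^1$, the leaf-constants of $x_0$ and $y$ must coincide modulo $2\pi$ with error $O(l)$. Substituting $\arg y = \Phi_1 + F(|y|)-F(\rho_1)$ gives
\[
\Phi_1 - \Phi_0 \;\equiv\; F(\rho_1) - F(\rho_0) \pmod{2\pi}, \qquad \text{error } O(l).
\]

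\textbf{Step (b): radial and reparametrization estimates.} Combining $\Rep(l)$ with the radial part of the shadowing condition at the window's end (the first time one trajectory reaches radius $1$) pins down $h(0)$ to within $O(l)|\ln\rho_0|/a$ of $(|\ln\rho_1|-|\ln\rho_0|)/a$. Propagating this through $|y| = \rho_1 e^{ah(0)}$ shows $|\ln|y||$ and $|\ln\rho_0|$ agree up to a multiplicative factor $1+O(l)$, and similarly $|\ln\rho_1|/|\ln\rho_0| \in (1-O(l), 1+O(l))$. Hence
\[
|F(\rho_1) - F(\rho_0)| \;=\; \frac{1}{a}\left|\ln\frac{|\ln\rho_0|}{|\ln\rho_1|}\right| \;=\; O(l/a).
\]

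\textbf{Step (c): choice of parameters.} The total error $O(l)+O(l/a)$ from (a) and (b) can be made smaller than $\pi/4$ by fixing $a$ of moderate size and then taking $l$ sufficiently small; $K>3$ is then chosen large enough that $l<1/K$ so the non-rotating region reaches up to $r=1$. Combined with (a), this forces $|\arg(x_1)-\arg(x_0)|_{S^1}<\pi/4$.

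The main obstacle is step (b). Because $b(r)=-1/\ln r$ decays very slowly and the shadowing is only enforced inside the unit ball, the shadowing window has length $\frac{1}{a}\ln(1/\rho_0)$, which grows as $\rho_0\to 0$; the $\Rep(l)$ slack over this window is $l\cdot\ln(1/\rho_0)/a$, exactly the size of $|\ln\rho_0|$ up to the factor $l/a$. The calibration that turns this slack into a multiplicative control $1\pm O(l)$ on $|\ln\rho_0|/|\ln\rho_1|$ is delicate and is the point where the specific form $b(r)=-1/\ln r$ (rather than any slowly decaying rotation) is essential.
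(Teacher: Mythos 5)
Your items (i) and (ii) are fine and essentially coincide with the paper's computations. For (iii) your overall scheme is also the paper's: compare the two orbits at a time when both lie in the region where $b$ vanishes (you do it near radius $1$, the paper at radius $Kl$), and reduce the claim to showing that the angular increment accumulated between the two radial levels, $F(\rho_1)-F(\rho_0)=\frac1a\ln\frac{|\ln\rho_0|}{|\ln\rho_1|}$ (in the paper, $\int_0^\Delta b(e^{a\tau}r_0)\,\dd\tau$ with $e^{a\Delta}r_0=r_1$), is small. The problem is your Step (b), which is the crux and which you yourself flag as unresolved. The shadowing data together with $h\in\Rep(l)$ controls only the combination $ah(0)+\ln\rho_1$ against $\ln\rho_0$ (equivalently $|y|=\rho_1e^{ah(0)}$ against $\rho_0$); the inference ``and similarly $|\ln\rho_1|/|\ln\rho_0|\in(1-O(l),1+O(l))$'' does not follow. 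Concretely, take $x_1=\psi(-s,x_0)$ with $s$ huge and $h(t)=t+s$: then $h\in\Rep(l)$, $\dist(\psi(h(t),x_1),\psi(t,x_0))=0$ for all $t$, and your pinned-down relation $h(0)=(|\ln\rho_1|-|\ln\rho_0|)/a$ holds exactly, yet $|\ln\rho_1|/|\ln\rho_0|$ is arbitrary, $F(\rho_1)-F(\rho_0)$ is as large as you like, and $|\arg x_1-\arg x_0|$ can exceed $\pi/4$. So the multiplicative control cannot be extracted from the stated data alone: one must normalize $h(0)=0$ (this is exactly how the lemma is used in Section \ref{secst}, and the paper's proof uses it implicitly through the bound $|h(T)-T|\le l\,|T|$); it is an assumption, not a consequence, and your detour through ``pinning down $h(0)$'' cannot replace it.

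Once $h(0)=0$ is granted, the estimate you call delicate is a short computation, and it is the paper's: with $e^{aT}r_0=Kl$, the radial matching at the comparison time forces $e^{ah(T)}r_1\in[(K-1)l,(K+1)l]$, which together with $|h(T)-T|\le lT$ gives $|\Delta|\le\frac1a\bigl(l\,|\ln r_0|+\frac{4}{K-1}\bigr)$, i.e. inequality \sref{33.1}; since then $a|\Delta|<\frac12|\ln r_0|$ for small $l$, one has $b(e^{a\tau}r_0)\le 2b(r_0)=2/|\ln r_0|$ for $\tau$ between $0$ and $\Delta$, whence $\bigl|\int_0^\Delta b(e^{a\tau}r_0)\,\dd\tau\bigr|\le\frac2a\bigl(l+\frac{4}{(K-1)\,|\ln l|}\bigr)<\pi/8$. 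The cancellation $b(r_0)\cdot|\ln r_0|=1$ is precisely the ``calibration'' you point to, and it needs nothing beyond these two lines. Since you assert the key bound without carrying it out and explicitly identify it as the main obstacle, the heart of item (iii) is missing from your writeup; the remaining ingredients (matching of leaf constants modulo $2\pi$ in the non-rotating region, plus the minor continuity remark that the shadow cannot leave the unit ball before the true orbit reaches the comparison radius) are sound and parallel the paper's \sref{32.1.4} and \sref{32.2}.
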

The proof of this lemma is quite technical, we give it in the Appendix.
\begin{remark}
Vector field $Y$ is of class $C^1$ but not $C^{1+ H\ddot{o}lder}$. We do not know if it is possible to construct a 2-dimensional vector field of class $C^{1+ H\ddot{o}lder}$ satisfying items (ii), (iii) of Lemma~\ref{lem2d}. As the result our example of vector field $X$ is not $C^{1+ H\ddot{o}lder}$. We do not know if it is an essential restriction or drawback of our particular construction.
\end{remark}

For the rest of the paper let us fix $a, l > 0$, $K > 3$ from item (iii) of Lemma \ref{lem2d}.

We will also need the following statement, which we prove in the appendix.
\begin{lem}\label{lemCyl}
Let $S_1$ and $S_2$ be three-dimensional hyperplanes with coordinates
$(x_1, x_2, x_3)$ and $(y_1, y_2, y_3)$ respectively. Let $Q: S_2 \to S_1$
be a linear map satisfying the following condition
$$
Q\{y_2 = y_3 = 0\} \ne \{x_2 = x_3 = 0\}.
$$
Then for any $D> 0$ there exists $R>0$ (depending on $Q$ and $D$) such that
for any two sets $\Sp_1 \subset S_1 \cap \{x_1 =
0\}$ and $\Sp_2 \subset S_2 \cap \{y_1 = 0\}$ satisfying
\begin{itemize}
\item $\Sp_1 \subset B(R, 0)$, $\Sp_1 \subset B(R, 0)$;
\item $Sp_1$ intersects any halfline in  $S_1 \cap \{x_1 =
0\}$ starting at 0;
\item $Sp_2$ intersects any halfline in  $S_2 \cap \{y_1 =
0\}$ starting at 0;
\end{itemize}
the sets
$$
\Cyl_1 = \{(x_1, x_2, x_3), \quad |x_1| < D, \; (0, x_2, x_3) \in \Sp_1
\},
$$
$$
\Cyl_2 = \{(y_1, y_2, y_3), \quad |y_1| < D, \; (0, y_2, y_3) \in \Sp_2
\}
$$
satisfy the condition $\Cyl_1 \cap Q \Cyl_2 \ne \emptyset$.
\end{lem}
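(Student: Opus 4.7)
The plan is to exhibit a concrete point $p \in \Cyl_1 \cap Q\Cyl_2$ of the form $p = Q(y_1^*, y_2^*, y_3^*)$ for a carefully chosen triple, with $(0, y_2^*, y_3^*) \in \Sp_2$, $|y_1^*| < D$, and $|p_1| < D$. The hypothesis on $Q$ will be used only to extract a nonzero transverse direction in the $(x_2, x_3)$-plane of $S_1$.

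Write $v_1 := Qe_1^{(2)} = (\alpha, w_2, w_3)$ in coordinates on $S_1$, and put $w := (w_2, w_3) \in \RR^2$; the hypothesis $Q\{y_2=y_3=0\} \ne \{x_2=x_3=0\}$ is precisely the statement $w \ne 0$. Introduce the induced linear maps $\tilde Q : \RR^2 \to \RR^2$ and $\beta : \RR^2 \to \RR$ by setting $Q(0, y_2, y_3) = (\beta(y_2, y_3), \tilde Q(y_2, y_3))$. For a point $p = Q(y_1, y_2, y_3)$ coming from $Q\Cyl_2$, belonging to $\Cyl_1$ is then equivalent to the two conditions $y_1 w + \tilde Q(y_2, y_3) \in \Sp_1$ (with $\Sp_1$ identified with a subset of $\RR^2$) and $|y_1 \alpha + \beta(y_2, y_3)| < D$.

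To arrange the first condition I would consider the linear functional $\eta : \RR^2 \to w^\perp$ obtained by composing $\tilde Q$ with orthogonal projection onto the line $w^\perp \subset \RR^2$. Since $w^\perp$ is one-dimensional, $\ker \eta$ contains a line $\ell$ through the origin of the $(y_2, y_3)$-plane. The hypothesis that $\Sp_2$ meets every halfline in $\{y_1 = 0\}$ issuing from $0$ lets me pick $(0, y_2^*, y_3^*) \in \Sp_2$ with $(y_2^*, y_3^*) \in \ell$; by construction $\tilde Q(y_2^*, y_3^*) = \lambda^* w$ for some scalar $\lambda^*$. Next, pick any $\mu^+ w \in \Sp_1$ on the positive $w$-halfline (such a point exists by the analogous hypothesis on $\Sp_1$) and set $y_1^* := \mu^+ - \lambda^*$. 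Then $y_1^* w + \tilde Q(y_2^*, y_3^*) = \mu^+ w \in \Sp_1$, so $p := Q(y_1^*, y_2^*, y_3^*)$ already projects into $\Sp_1$.

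It remains to ensure $|y_1^*| < D$ and $|p_1| = |y_1^* \alpha + \beta(y_2^*, y_3^*)| < D$. Writing $C := \|Q\|$ and using $\Sp_1, \Sp_2 \subset B(R, 0)$, one bounds $|\mu^+| \le R/|w|$, $|\lambda^*| \le CR/|w|$, and $|\beta(y_2^*, y_3^*)| \le CR$, so both $|y_1^*|$ and $|p_1|$ are linear in $R$ with coefficients depending only on $\alpha$, $w$, and $C$. Choosing $R > 0$ small enough in terms of $D$ and those data yields $p \in \Cyl_1 \cap Q\Cyl_2$. The only conceptual step is the dimension count that provides a nontrivial line in $\ker \eta$, and this is exactly where the hypothesis $w \ne 0$ enters; everything else is bookkeeping of linear norm bounds.
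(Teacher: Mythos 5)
Your proof is correct and is essentially the paper's own argument rewritten in coordinates: your line $\ell = \ker\eta$ is the paper's $Q^{-1}V \cap \{y_1 = 0\}$, your point $\mu^+ w$ is its point $z_1 \in \Sp_1 \cap V \cap \{x_1=0\}$, and your $p = Q(y_1^*, y_2^*, y_3^*)$ is exactly the intersection point $z$ of the two lines $k_1, k_2$ that the paper traps inside a parallelogram, with your explicit norm bounds on $|y_1^*|$ and $|p_1|$ playing the role of its choice of $R$ via ball-in-parallelogram inclusions. (Your identification of the hypothesis with $w \ne 0$ tacitly assumes $Qe_1 \ne 0$, but the paper makes the same implicit nondegeneracy assumption on $Q$, which in the application is an invertible Poincar\'e map.)
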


\section{Construction of a 4-dimensional vector field}\label{secconstr}

Consider a vector field $X$ on the manifold $M = S^2 \times S^2$
that has the following properties (F1)-(F6) ($\phi$ denotes the
flow generated by $X$).

\begin{enumerate}
\item[\textbf{(F1)}] The nonwandering set of $\phi$ is the union of
four rest points $p, q, s, u$.
\item[\textbf{(F2)}] In the neighborhoods $U_p = B(1, p)$, $U_q = B(1, q)$ one can introduce coordinates such that
$$
X(x)=J_p(x-p),\quad x \in U_p;\quad
\quad X(x)=J_q(x-q),\quad x \in U_q,
$$
where

\begin{equation}\notag
J_p(x) = \left(
            \begin{array}{cccc}
              -1 & 0 & 0 & 0 \\
              0 & -2 & 0 & 0 \\
              0 & 0 & a & -b(r(x_3, x_4)) \\
              0 & 0 & b(r(x_3, x_4)) & a \\
            \end{array}
          \right)x,
\end{equation}
\begin{equation}\notag
J_q(x) = -\left(
           \begin{array}{cccc}
             -1 & 0 & 0 & 0 \\
             0 & a & 0 & -b(r(x_2, x_4)) \\
             0 & 0 & -2 & 0 \\
             0 & b(r(x_2, x_4)) & 0 & a \\
           \end{array}
         \right)x.
\end{equation}
For point $x = (x_1, x_2, x_3, x_4) \in U_p$ denote $P_1x = x_1$, $P_{34}x  = (x_3, x_4)$, for point $x = (x_1, x_2, x_3, x_4) \in U_q$ denote $P_1x = x_1$, $P_{24}x  = (x_2, x_4)$, etc.

\item[\textbf{(F3)}] The point $s$ is an attracting hyperbolic rest point. The point
$u$ is a repelling hyperbolic rest point. The following condition
holds:
$$
\Wu(p) \setminus \{p\} \subset \Ws(s), \quad \Ws(q)
\setminus \{q\} \subset \Wu(u),
$$
where $W^u(p)$ is the unstable manifold of $p$, $W^s(q)$ is the stable manifold of $q$, etc. For $m> 0$ we denote $W^u_{loc}(p, m)=\Wu(p) \cap B(m, p)$ etc.

\item[\textbf{(F4)}] The intersection of $\Ws(p) \cap \Wu(q)$ consists of a single trajectory $\al$, satisfying the following
    $$
    \al \cap U_p \subset \{p+(t, 0, 0, 0); t \in (0, 1\}; \quad
    \al \cap U_q \subset \{q-(t, 0, 0, 0); t \in (0, 1\}
    $$
\item[\textbf{(F5)}] For some $\Delta \in (0, 1)$, $T_a > 0$ the following holds
$$
\phi(T_a, q + (-1, x_2, x_3, x_4)) = (p + (1, x_2, x_3, x_4)), \quad |x_2|, |x_3|, |x_4| < \Delta.
$$

\item[\textbf{(F6)}]
$\phi(t,x) \notin U_q$, for $x \in U_p, \; t\geq 0$.
\end{enumerate}
The construction is similar to \cite[Appendix A]{PilTikh}. We leave details to the reader.

\begin{thm}\label{thmOrientSh}
Vector field $X$ satisfies the oriented shadowing property.
\end{thm}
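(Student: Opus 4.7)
The plan is to split $d$-pseudotrajectories into two regimes and handle each separately. For pseudotrajectories that stay away from small neighborhoods of the two saddles $p$ and $q$, the dynamics (determined by the attractor $s$, the repeller $u$, and the transits between them) is essentially Morse--Smale, and standard shadowing arguments (as in \cite{Rob, PilFlow}) produce a shadowing orbit, even with a bounded reparametrization. The substantive work is for pseudotrajectories that enter both $U_p$ and $U_q$ and therefore shadow the heteroclinic trajectory $\alpha$; by property (F6), any such excursion consists of a single passage $U_q \to U_p$.

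For such a pseudotrajectory $g$, I would parametrize candidate shadowing orbits by the 3-dimensional section $\Sigma = \{x_1 = -1\} \cap U_q$ with coordinates $(y_2, y_3, y_4)$, which by (F5) is identified with $\{x_1 = 1\} \cap U_p$ via the identity on $(y_2, y_3, y_4)$ after time $T_a$. For a candidate base point $x^* = (-1, y_2, y_3, y_4) \in \Sigma$, the forward orbit through $x^*$ enters $U_p$ at $(1, y_2, y_3, y_4)$ and its $(x_3, x_4)$-component is an orbit of the 2-dimensional expanding spiral $\psi$ from Section~\ref{secaux}; the backward orbit re-enters $U_q$, and its $(x_2, x_4)$-component is, after time reversal, also an orbit of $\psi$. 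The pseudotrajectory $g$ induces angular profiles $\theta_p(t)$ in the $(x_3, x_4)$-plane of $U_p$ and $\theta_q(t)$ in the $(x_2, x_4)$-plane of $U_q$; the goal is to choose $x^*$ so that both profiles are shadowed, after a suitable reparametrization and within $\varepsilon$, simultaneously.

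The central step is to show that these two matching problems admit a simultaneous solution. Because $h \in \Rep$ is arbitrary, only the trajectory of $\psi$ through $(y_3, y_4)$ in the plane (not its time parametrization) matters for matching $\theta_p$; so the set $A \subset \Sigma$ of $x^*$ realising $\theta_p$ is obtained by constraining $(y_3, y_4)$ to a specific orbit of $\psi$ in the $(y_3, y_4)$-plane. By Lemma~\ref{lem2d}(ii), that orbit winds around the origin and meets every halfline from $0$, so $A$ is a cylinder with free coordinate $y_2$ and angularly surjective $(y_3, y_4)$-projection. Symmetrically, the set $B \subset \Sigma$ realising $\theta_q$ is a cylinder with free coordinate $y_3$ and angularly surjective $(y_2, y_4)$-projection. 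Lemma~\ref{lemCyl} then applies to $A$ and $B$ viewed in the two coordinate orderings $(y_2, y_3, y_4)$ and $(y_3, y_2, y_4)$, with the linear map $Q$ being the transposition of the first two coordinates; its hypothesis $Q\{y_2 = y_3 = 0\} \neq \{x_2 = x_3 = 0\}$ is satisfied because the two free axes $y_2$ and $y_3$ are distinct. Hence $A \cap B \neq \emptyset$, and any point of the intersection is the base point of a true orbit realizing both angular matches.

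The final step is to glue local reparametrizations on the three natural intervals (the part of $g$ inside $U_q$, the bounded transit piece between $U_q$ and $U_p$, and the part inside $U_p$) into one $h \in \Rep$ verifying \sref{eqsh}; this is routine because the radial dynamics in $(x_3, x_4)$ and in $(x_2, x_4)$ is monotone exponential, so radii and elapsed times can be matched by stretching $h$. The main obstacle will be making the angular-match claim of the third paragraph precise: one must show that whatever angular progression the pseudotrajectory executes while spiralling out of $(x_3, x_4) = 0$ is reproduced, at matching radii and within the tolerance $\varepsilon$, by some orbit of $\psi$, and that the resulting constraint on $(y_3, y_4)$ is a (thickened) orbit of $\psi$ to which Lemma~\ref{lemCyl} applies. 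This is where Lemma~\ref{lem2d}(ii) does the essential work, and it is the geometric heart of the proof.
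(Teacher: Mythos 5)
Your plan follows essentially the same route as the paper: first reduce to pseudotrajectories that pass along the heteroclinic orbit $\alpha$ (the paper's Proposition \ref{prop2}, quoted from \cite{PilTikh}), then obtain the shadowing orbit by intersecting two cylinders, each the product of a small interval in the free contracting coordinate with a (tail of a) backward spiral orbit of $\psi$, which is angularly surjective by Lemma \ref{lem2d}(ii), so that Lemma \ref{lemCyl} applies. Your single-section formulation with the coordinate transposition is just the paper's Poincar\'e map $Q=\phi(T,\cdot):S_q\to S_p$ written in one chart, and the gluing of the reparametrization is done exactly as you indicate.

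The one genuine gap is that your cylinder argument covers only the paper's case (P1), i.e.\ $x_p\notin W^s(p)$ and $x_q\notin W^u(q)$. If the forward branch of the reduced pseudotrajectory lies on $W^s(p)$ (equivalently its $(x_3,x_4)$-component is $0$), then the ``specific orbit of $\psi$'' to which you constrain $(y_3,y_4)$ is the rest point at the origin: the set $A$ degenerates to a segment of the $y_2$-axis, it is not angularly surjective, and Lemma \ref{lemCyl} cannot be invoked (similarly when $x_q\in W^u(q)$, and when both happen). These degenerate cases need a separate, easier argument: choose the shadowing point on $W^s_{loc}(p)$ (resp.\ $W^u_{loc}(q)$, resp.\ take $\alpha$ itself) and use Lemma \ref{lem2d}(ii) directly to satisfy the single remaining spiral constraint, as the paper does in cases (P2)--(P4). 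Two smaller points: the dichotomy in your first paragraph omits pseudotrajectories that enter only one of $U_p$, $U_q$ (the reduction should be organized around closeness to $\alpha$, as in Proposition \ref{prop2}); and when applying Lemma \ref{lemCyl} you must truncate the spiral sets to a sufficiently deep tail of the orbit so that they lie in $B(R,0)$. Both are routine, but should be stated.
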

\begin{thm}\label{thmStSh}
Vector field $X$ does not satisfy the standard shadowing property.
\end{thm}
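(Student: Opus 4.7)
The plan is to build, for arbitrarily small $d$, a single-jump $d$-pseudotrajectory that no true orbit can standard-shadow, by combining the product structure in $U_p, U_q$, the linear holonomy (F5) across $\al$, and the angular rigidity of $Y$ given by Lemma~\ref{lem2d}(iii). Assume for contradiction that $X\in\StSh$; fix $\ep_0\in(0,l)$ and let $d_0$ be the associated shadowing parameter. In the $U_p$ coordinates of (F2), pick small $\rho>0$ and $\eta\in(0,d_0e^{-a})$ and set $y_-=p+(\rho,0,0,0)\in\al$ and $y_+=p+(\rho,0,0,\eta)$. Define $g(t)=\phi(t,y_-)$ for $t\le 0$ and $g(t)=\phi(t,y_+)$ for $t>0$. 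The only discontinuity is at $t=0$ and has size $\eta$, so a direct estimate using the linear form of $X$ on $U_p$ shows $g$ is a $d_0$-pseudotrajectory. Moreover $g(t)\to q$ as $t\to-\infty$ (since $y_-\in\al$) and $g(t)\to s$ as $t\to+\infty$ (since $y_+\in W^u(p)\setminus\{p\}\subset W^s(s)$ by (F3)).

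Suppose $\phi(h(t),x_0)$ $\ep_0$-standard-shadows $g$ with $h\in\Rep(\ep_0)$. Then $h(t)\to\pm\infty$ as $t\to\pm\infty$, so the $\alpha$- and $\omega$-limits of $x_0$ must be $\{q\}$ and $\{s\}$, giving $x_0\in W^u(q)\cap W^s(s)$. The subcase $x_0\in\al$ is ruled out because $\phi(t,x_0)\to p$ whereas $g(t)\to s$, violating $\ep_0$-shadowing for large $t$. Hence $x_0\in W^u(q)\setminus\al$. In the $U_q$ coordinates (F2), $W^u(q)=\{x_2=x_4=0\}$, so the orbit keeps $x_2=x_4=0$ throughout its passage through $U_q$ and exits at $\{x_1=-1\}$ at a point $q+(-1,0,x_3^*,0)$ with $x_3^*\ne 0$. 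By (F5), the orbit next arrives at $p+(1,0,x_3^*,0)$. Inside $U_p$, $X$ acts on the $(x_3,x_4)$-block exactly by the field $Y$, so the $(x_3,x_4)$-projection of $\phi(\cdot,x_0)$ during its stay in $U_p$ is the $Y$-trajectory $\psi(\cdot,(x_3^*,0))$ of initial argument $0$ or $\pi$; meanwhile, for $t\ge 0$ the $(x_3,x_4)$-projection of $g$ is $\psi(t,(0,\eta))$, of initial argument $\pi/2$.

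Let $\tau_{\text{ent}}$ be the orbit time at which $\phi(\cdot,x_0)$ crosses $\{x_1=1\}$, and set $\bar h(t)=h(t)-\tau_{\text{ent}}$; since $\bar h$ differs from $h$ by a constant, $\bar h\in\Rep(\ep_0)\subset\Rep(l)$. Projecting the 4D shadowing inequality onto $(x_3,x_4)$ yields
$$
\dist\bigl(\psi(\bar h(t),(x_3^*,0)),\,\psi(t,(0,\eta))\bigr)<\ep_0<l
$$
for every $t>0$ on which both 2D trajectories lie in the unit ball, an interval which, by choosing $\rho$ small enough, has length of order $\min(\ln(1/\eta),\ln(1/|x_3^*|))/a$. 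Lemma~\ref{lem2d}(iii) then applies and gives $|\arg(x_3^*,0)-\arg(0,\eta)|<\pi/4$; but these two arguments differ by exactly $\pi/2$, contradiction.

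The main obstacle is the verification step for the 2D reduction: one must bound $|x_3^*|<2l$ so the hypotheses of Lemma~\ref{lem2d}(iii) are satisfied, guarantee that the two 2D trajectories remain in $B(1,0)$ over a common time window long enough for the shadowing to impose an angular constraint, and confirm that the reparametrization $\bar h$ obtained after the constant shift is still in $\Rep(l)$. All three points follow from continuity of the flow and appropriate relative choices of $\rho,\eta$ in terms of $\ep_0$, but they require careful bookkeeping of the crossing times of $\{x_1=\pm 1\}$ by the shadowing orbit.
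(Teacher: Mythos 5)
There is a genuine gap, and it sits exactly at the step you flagged as ``bookkeeping'': the application of Lemma~\ref{lem2d}(iii) with the shifted reparametrization $\bar h(t)=h(t)-\tau_{\mathrm{ent}}$. The lemma compares the arguments of the two \emph{time-synchronized} initial states: its proof uses $h(0)=0$ (the $\Rep(l)$ condition is invoked in the form $|h(T)-T|\le lT$), and without that normalization the statement is simply false --- take $x_1=\psi(-s,x_0)$ and $h(t)=t+s$: the tracking distance is identically $0$ and $h\in\Rep(l)$, yet by item (ii) the backward spiral rotates without bound, so $|\arg x_1-\arg x_0|$ can be anything. In your setup the jump is placed at $x_1=\rho$, deep inside $U_p$, while $\tau_{\mathrm{ent}}$ is the crossing of $\{x_1=1\}$; since $x_0$ is within $\ep_0$ of $y_-=p+(\rho,0,0,0)$ at pseudo-time $0$ (with $h(0)=0$), one has $\tau_{\mathrm{ent}}\approx-\ln(1/\rho)$, so $\bar h(0)\approx\ln(1/\rho)$ is large, not $0$. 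Thus you are comparing the reference's argument at pseudo-time $0$ (namely $\pi/2$) with the shadow's argument at a pseudo-time about $-\ln(1/\rho)$ earlier (namely $0$ or $\pi$ at the section). During the whole intervening stretch the reference $g(t)$ lies on $\al$, whose $(x_3,x_4)$-projection is the origin, so the tracking inequality transmits no angular information there, and the shadow's $2$-dimensional phase can rotate by an amount of order $\frac1a\ln\frac{\ln(1/\eta)+a\ln(1/\rho)}{\ln(1/\eta)}$ before pseudo-time $0$. The claimed contradiction ``$0$ or $\pi$ versus $\pi/2$'' therefore does not follow; worse, the construction itself is in doubt: a shadower can pick $x_0\in W^u(q)$ with $|x_3^*|\approx\eta\rho^a$ tuned so that this forced rotation lands the phase near $\pi/2$ at pseudo-time $0$, so for generic independent choices of $\rho,\eta$ your single-jump pseudotrajectory is likely standard-shadowable.

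This is precisely why the paper's construction places the $d$-perturbations \emph{at the sections}: the jumps occur at $a_p+de_p$ and $a_q+de_q$, so the shadowing orbit crosses $S_p$ (resp.\ $S_q$) at an orbit time within $O(\ep)$ of the pseudo-time at which the reference sits on the section, and Lemma~\ref{lem2d}(iii) is applied with synchronized time origins. Moreover the paper needs \emph{two} angular constraints --- forward near $p$ giving $P_4x_p>0$ and backward near $q$ giving $P_4x_q<0$ --- which contradict each other through the identity holonomy (F5). Your idea of replacing the second jump by the exact membership $x_0\in W^u(q)$ (forcing argument $0$ or $\pi$ at the section) only pins the phase at the section, not at the jump time, and the uncontrolled spiraling in between is exactly what the two-jump construction is designed to eliminate. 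To salvage a one-jump variant you would have to control that rotation quantitatively and choose $\rho,\eta$ in a correlated way (so the forced rotation is, say, $\approx0\bmod 2\pi$ with error beating all the $\Rep(\ep_0)$ slack), which is a substantially more delicate argument than the one you wrote and is not needed in the paper's proof.
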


Trivially Theorem \ref{thmMain} follows from Theorems \ref{thmOrientSh}, \ref{thmStSh}.

\section{Oriented Shadowing property}\label{secorient}

Fix $\ep > 0$.

For points $y_p = \al(T_p) \in U_p , y_q  = \al(T_q) \in U_q$ (note that $T_p > T_q$) and $\delta > 0$ we say that ${g}(t)$ is a pseudotrajectory of
type \text{Ps}$(\delta)$ if
\begin{equation}\label{Pst1.2.2}\notag
{g}(t) =
\begin{cases}
\phi(t - T_p, x_p), & t > T_p, \\
\phi(t - T_q, x_q), & t < T_q, \\
\alpha(t), & t \in [T_q, T_p],
\end{cases}
\end{equation}
for some points $x_p \in B(\delta, y_p)$ and $x_q \in B(\delta, y_q)$.

\begin{proposition}\label{prop2}
For any $\delta > 0$, $y_p \in  \al\cap
U_p$, and $y_q \in \al\cap U_q$ there exists $d> 0$ such that if
$g(t)$ is a $d$-pseudotrajectory of $X$, then either $g(t)$ can  be
$\ep$-oriented shadowed or there exists a pseudotrajectory $g^*(t)$ of type
\text{Ps}$(\delta)$ with these $y_p$ and $y_q$ such that
$$
\dist(g(t), g^*(t)) < \ep/2,\quad t\in\RR.
$$
\end{proposition}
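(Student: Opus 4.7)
The plan is to dichotomize a $d$-pseudotrajectory $g$ according to whether its orbit approaches the heteroclinic arc $\alpha([T_q,T_p])$. Fix $r\in(0,\delta)$ small and set $V_p=B(r,y_p)\subset U_p$, $V_q=B(r,y_q)\subset U_q$. For $d$ sufficiently small, one of the following occurs: either $g$ never enters $V_p$ (equivalently, by the gradient-like behaviour of $X$ along $\alpha$, never enters $V_q$), in which case $g$ is $\ep$-oriented shadowed directly; or $g$ enters both $V_p$ and $V_q$ at definite times, and we extract the required approximation $g^*$.

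For the avoidance case, we exploit that outside a fixed tubular neighborhood $\mathcal{T}'$ of the compact arc $\alpha([T_q,T_p])$, the only nonwandering orbits are the hyperbolic fixed points $p,q,s,u$, and all remaining stable/unstable manifold intersections are transverse (the sole tangency, along $\alpha$, lies inside $\mathcal{T}'$). A pseudotrajectory $g$ staying in $M\setminus\mathcal{T}'$ is then handled by combining linear hyperbolic shadowing in the local charts $U_p,U_q$ and in small hyperbolic neighborhoods of $s$ and $u$ with Gronwall estimates on the bounded-time transit pieces joining these charts, in the spirit of Theorem \ref{thmSS}. This yields a genuine trajectory $\ep$-oriented shadowing $g$.

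In the remaining case, the gradient-like structure along $\alpha$ forces $g$ to enter $V_q$ strictly before $V_p$; pick such times $\tilde T_q<\tilde T_p$ and, after a time shift on $g$, assume $\tilde T_q=T_q$ and $\tilde T_p=T_p$. Set $x_p=g(T_p)\in B(\delta,y_p)$, $x_q=g(T_q)\in B(\delta,y_q)$, and let $g^*$ be the type Ps$(\delta)$ pseudotrajectory determined by these endpoints. On the bounded interval $[T_q,T_p]$, iterated use of the pseudotrajectory inequality together with the Lipschitz constant $L$ of $X$ gives $\dist(g(t),\alpha(t))\le Cd\,e^{L(T_p-T_q)}<\ep/2$ for $d$ small. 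For $t>T_p$ one compares $g(t)$ with $\phi(t-T_p,x_p)=g^*(t)$: both lie in $U_p$ near $x_p$ and then flow into the attracting basin of $s$, so the hyperbolic contraction along $\Ws(s)$ and along the contracting eigendirections in $U_p$ absorbs the pseudotrajectory error and yields uniform closeness for all $t>T_p$. The case $t<T_q$ is symmetric via the repelling structure near $u$.

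The main obstacle is precisely obtaining a uniform estimate on the unbounded regimes $t\to\pm\infty$: a naive Gronwall bound would blow up exponentially, so hyperbolic contraction toward $s$ (respectively, backward toward $u$) must be used to absorb the pseudotrajectory jumps. The standard mechanism is that after a controlled finite time the pseudotrajectory enters a small neighborhood of $s$, where the linear contraction dominates the pseudotrajectory error and any accumulated discrepancy decays exponentially. Choosing $d$ small in terms of $\ep$, the Lipschitz constants of $X$ near $p,q,s,u$, and the interval length $T_p-T_q$ closes the estimate in every regime.
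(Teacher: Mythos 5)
Your dichotomy — pseudotrajectories that stay away from a tubular neighborhood of $\alpha$ versus those that enter $B(r,y_p)$ and $B(r,y_q)$ — matches the paper's sketch, and the avoidance case is treated in the right spirit (the paper invokes Remark~\ref{remNoRep}, Lipschitz shadowing \emph{without reparametrization} on the structurally stable part of $X$; your Gronwall-plus-local-hyperbolic-charts argument is a rougher version of the same idea). But your treatment of the intersection case has a genuine gap in the choice of the endpoints $x_p$, $x_q$.

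You set $x_p = g(T_p)$ and claim that $\dist(g(t),\phi(t-T_p,x_p))<\ep/2$ for all $t>T_p$ because ``hyperbolic contraction along $\Ws(s)$ and along the contracting eigendirections in $U_p$ absorbs the pseudotrajectory error.'' This is not correct: $p$ is a saddle, not a sink. The linearization $J_p$ has an expanding block with eigenvalue $a>0$ in the $(x_3,x_4)$-plane, and errors along those directions are \emph{amplified}, not absorbed, while the orbit is near $p$. Moreover the escape time from $U_p$ is not ``controlled finite'': it depends on how close $g(T_p)$ is to $\Ws(p)$ and can be arbitrarily large, so the accumulated discrepancy $d\,e^{aT_1}$ incurred before either orbit reaches a neighborhood of $s$ can be of order one no matter how small $d$ is. In the extreme case $g(T_p)\in\Ws(p)$, the exact orbit $\phi(t-T_p,g(T_p))$ converges to $p$ while $g(t)$ generically drifts off $\Ws(p)$ and tends to $s$, so the two separate by roughly $\dist(p,s)$. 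Thus the mechanism you flag as the ``main obstacle'' is not resolved by contraction toward $s$.

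The correct $x_p$ is not $g(T_p)$ but the output of a shadowing step. One views the forward half of $g$ (for $t\ge T_p$) as a pseudotrajectory of a structurally stable auxiliary system (as in \cite{PilTikh}, one breaks the heteroclinic connection by modifying $X$ away from this forward orbit), applies Lipschitz shadowing without reparametrization (Remark~\ref{remNoRep}), and takes $x_p$ to be the value of the shadow at time $T_p$. Then $\dist(x_p,g(T_p))\le Ld$, so $x_p\in B(\delta,y_p)$ once $d$ is small, and $\phi(t-T_p,x_p)$ tracks $g(t)$ for all $t\ge T_p$ \emph{by construction}; the shadowing theorem is precisely what adjusts $x_p$ away from $g(T_p)$ in the unstable directions so that the orbits stay together. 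The symmetric argument produces $x_q$ from the backward half. Your Gronwall estimate on the finite interval $[T_q,T_p]$ is fine; the gap is only in the unbounded regimes.
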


\begin{proposition}\label{prop3} There exists $\delta > 0$, $y_p
\in\al\cap U_p$, and $y_q \in \al\cap U_q$ such that any
pseudotrajectory of type \text{Ps}$(\delta)$ with these $y_p$
and $y_q$ can be $\ep/2$-oriented shadowed.
\end{proposition}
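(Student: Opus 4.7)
Plan. I aim to find $x_0 \in M$ and $h \in \Rep$ such that $\phi(h(t), x_0)$ is globally $\ep/2$-close to $g(t)$. Since $g$ equals $\alpha$ on $[T_q, T_p]$ and coincides with the forward (resp.\ backward) orbit of $x_p$ (resp.\ $x_q$) elsewhere, the orbit of $x_0$ should approximately follow $\alpha$ through $U_q$ and $U_p$ while oriented-shadowing the two spirals on either end. The essential new feature of oriented shadowing is that $h \notin \Rep(\ep)$ in general; this permits a time-shift of size $\sim \frac{1}{a}\ln(r_1/r_2)$ which absorbs the angular-phase mismatch between the true-orbit spiral and the pseudotrajectory spiral, precisely the obstruction that Lemma \ref{lem2d}(iii) forbids for standard shadowing.

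Introduce Poincar\'e sections $\Sigma_p = \{x_1 = T_p'\} \cap U_p$ and $\Sigma_q = \{x_1 = -T_q'\} \cap U_q$, and consider the transit map $\Pi : \Sigma_q \to \Sigma_p$. By (F5), $\Pi$ factors as linear flow in $U_q$ for time $\tau_q = \ln(1/T_q')$, the identity on $(x_2, x_3, x_4)$, and linear flow in $U_p$ for time $\tau_p = \ln(1/T_p')$. Since $b(0) = 0$, the derivative $D\Pi(y_q)$ is diagonal in the $(x_2, x_3, x_4)$-basis: strong contraction in $x_2$, strong expansion in $x_3$, and a neutral factor in $x_4$. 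Define $\Sp_p \subset \Sigma_p \cap \{x_2 = 0\}$ as the $Y$-orbit through $P_{34}x_p$ intersected with a ball $B(R, 0)$ in the $(x_3, x_4)$-plane; by Lemma \ref{lem2d}(ii) it meets every halfline from $0$. Analogously define $\Sp_q \subset \Sigma_q \cap \{x_3 = 0\}$ from $P_{24}x_q$. The cylinders $\Cyl_p = \{|x_2| < D,\, (0, x_3, x_4) \in \Sp_p\}$ and $\Cyl_q = \{|x_3| < D,\, (x_2, 0, x_4) \in \Sp_q\}$ collect exactly the points of $\Sigma_p, \Sigma_q$ whose orbits, under a suitable reparametrization, oriented-shadow those of $x_p, x_q$ in the respective neighborhoods.

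Finally, apply Lemma \ref{lemCyl} with $Q = D\Pi(y_q)$, identifying the distinguished axes with the strong stable $x_2$-direction at $p$ and the strong unstable $x_3$-direction at $q$. The non-alignment hypothesis $Q\{y_2 = y_3 = 0\} \ne \{x_2 = x_3 = 0\}$ holds because the diagonal $D\Pi(y_q)$ sends the $x_3$-axis in $\Sigma_q$ to the $x_3$-axis in $\Sigma_p$, not to the $x_2$-axis. The lemma produces $z \in \Cyl_q$ with $\Pi(z) \in \Cyl_p$; this $z$, combined with an appropriate $h \in \Rep$, is the sought shadowing orbit. The main obstacle is the bookkeeping that translates the abstract intersection into a pointwise $\ep/2$-oriented shadowing bound. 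Key steps are (i) showing that $\Pi$ is a small Lipschitz perturbation of $D\Pi(y_q)$ on $B(\delta, y_q)$ (requires $\delta$ small relative to $T_p', T_q'$), (ii) confirming that the spiral-matching reparametrization absorbs the angular drift within $\ep/2$ throughout $U_p$ and $U_q$ using the explicit form of $b(r)$, and (iii) propagating closeness to the attractors $s, u$ via (F3). Choosing $T_p', T_q', \delta$ in this order closes all estimates.
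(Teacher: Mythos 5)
Your construction reproduces, in essence, the paper's treatment of its main case: two sections transverse to $\alpha$ inside $U_q$ and $U_p$, spiral sets obtained as the planar orbits of $P_{24}x_q$ and $P_{34}x_p$ (with Lemma \ref{lem2d}(ii) supplying the ``meets every halfline'' hypothesis), cylinders in the strong stable direction $x_2$ at $p$ and the strong unstable direction $x_3$ at $q$, Lemma \ref{lemCyl} applied to the transit map to produce the shadowing point, and a free reparametrization to absorb the radial/phase mismatch. The genuine gap is that Proposition \ref{prop3} quantifies over \emph{all} pseudotrajectories of type Ps$(\delta)$, while your argument tacitly assumes $P_{34}x_p \neq 0$ and $P_{24}x_q \neq 0$. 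If $x_p \in W^s(p)$ (or $x_q \in W^u(q)$), the ``$Y$-orbit through $P_{34}x_p$'' is the single point $0$: Lemma \ref{lem2d}(ii) does not apply, the spiral set meets no halfline, and the cylinder construction collapses. Moreover no point of the kind your intersection would produce can work there: any orbit with nonzero $(x_3,x_4)$-component eventually leaves $U_p$ and converges to $s$, whereas $g(t)\to p$ as $t\to+\infty$, so the shadowing point must be chosen on $W^s_{loc}(p)$ (resp.\ $W^u_{loc}(q)$, resp.\ on $\alpha$ itself when both degeneracies occur). This is exactly why the paper splits the proof into cases (P1)--(P4) and gives a separate, simpler construction in the degenerate cases; your proposal covers only the analogue of (P1).

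Two further points. Your step (i), that $\Pi$ is a ``small Lipschitz perturbation'' of $D\Pi(y_q)$, is not available as stated: since $b(r)=-1/\ln r$, the rotation of the time-$t$ planar map tends to its value at the origin only with a logarithmic modulus, so $\Pi-D\Pi(y_q)$ is $o(|x-y_q|)$ but not Lipschitz small, and Lemma \ref{lemCyl} as stated gives a single intersection point rather than a robust one to perturb. The paper avoids this by applying the lemma to the section-to-section map itself and by using that the spiral sets are invariant under the planar flows, so the inside-$U_q$ and inside-$U_p$ parts of the transit carry a cylinder over a spiral orbit to a cylinder over the same spiral orbit (also, your ``neutral factor in $x_4$'' is $e^{a(t_p-t_q)}$, not neutral, though this is immaterial). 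Finally, the actual shadowing bound for all $t\in\RR$ is left as ``bookkeeping'': the paper fixes $m$, $\tau$, $\ep_1$ and then $\delta$ in a definite order and verifies the estimate separately on $[T_q,T_p]$ (via \sref{Pst1.4.1}), on the exit intervals near $p$ and $q$, and on the tails converging to $s$ and $u$ (via \sref{Pst1.5.1} and its analogue); without these uniform-in-time tail estimates the claimed $\ep/2$-closeness is not established.
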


Clearly, Propositions \ref{prop2} and \ref{prop3} imply that $X \in \OrientSh$.

Proof of Proposition \ref{prop2} is standard. Exactly the same statement was proved in \cite[Proposition 2]{PilTikh} for a slightly different vector field (the only difference is in the structure of matrixes $J_p$, $J_q$). The proof can be literally repeated in our case.

The main idea of the proof is the following. In parts ``far'' from $\al$ vector field is structurally stable and hence have shadowing property according to Remark \ref{remNoRep}. This statement implies that if $g(t)$ does not intersect a small neighborhood of $\al$ it can be shadowed. If $g(t)$ intersects a small neighborhood of $\al$ then (after a shift of time) for  $t > T_p$ points $g(t)$ also lies in a structurally stable part of $X$ and can be shadowed by  $\phi(t - T_p, x_p)$; similarly for $t < T_q$ points $g(t)$ can be shadowed by  $\phi(t - T_q, x_q)$; for $t \in (T_q, T_p)$ points $g(t)$ are close to $\al$. We omit details in the present paper.


\begin{proof}[Proof of Proposition \ref{prop3}]
Without loss of generality, we may assume that
$$
O^+(B(\ep/2, s),\phi) \subset B(\ep, s)\quad\mbox{and} \quad
O^-(B(\ep/2, u),\phi) \subset B(\ep, u).
$$

Take  $m \in (0, \ep/8)$. We take points $y_p=\alpha(T_p) \in B(m/2, p)\cap \al$ and $y_q=\alpha(T_q) \in B(m/2, q) \cap \al$. Put $T=T_p - T_q$. Take $\delta>0$
such that if $g(t)$ is a pseudotrajectory of type
\text{Ps}$(\delta)$ (with $y_p$ and $y_q$ fixed above), $t_0 \in
\RR$, and $x_0 \in B(2\delta, g(t_0))$, then
\begin{equation}\label{Pst1.4.1}
\dist(\phi(t-t_0, x_0), g(t)) < \ep/2 , \quad |t-t_0| \leq T+1.
\end{equation}

Consider a number $\tau > 0$ such that if $x \in \Wu(p) \setminus
B(m/2, p)$, then $\phi(\tau, x) \in B(\ep/8,s)$. Take
$\ep_1\in(0,m/4)$ such that if two points $z_1, z_2 \in M$ satisfy
the inequality $\dist(z_1, z_2) < \ep_1$, then
$$
\dist(\phi(t, z_1), \phi(t, z_2))< \ep/8, \quad |t| \leq \tau.
$$
In this case, for any $y \in B(\ep_1, x)$ the following inequalities hold:
\begin{equation}\label{Pst1.5.1}
\dist(\phi(t, x), \phi(t, y)) < \ep/4, \quad t \geq 0.
\end{equation}
Decreasing  $\ep_1$, we may assume that if $x'\in \Ws(q)
\setminus B(m/2, q)$ and $y' \in B(\ep_1, x')$, then
\begin{equation}\label{Pst1.5.1'}\notag
\dist(\phi(t, x'), \phi(t, y')) < \ep/4, \quad t \leq 0.
\end{equation}

Let $g(t)$ be a pseudotrajectory of type \text{Ps}$(\delta)$, where
$y_p$, $y_q$ and $\delta$ satisfy the above-formulated conditions.

Let us consider several possible cases.
\medskip

Case (P1):  $x_p \notin \Ws(p)$ and  $x_q \notin \Wu(q)$. Let
$$
T' = \inf \{ t \in \RR: \; \phi(t, x_p) \notin B(p, 3m/4) \}.
$$
If $\delta$ is small enough, then $\dist(\phi(T',x_p), \Wu(p)) <
\ep_1$. In this case, there exists a point $z_p \in W^u_{loc}(p,
m)\setminus B(m/2, p)$ such that
\begin{equation}\label{Text8.0.5}
\dist(\phi(T', x_p), z_p) < \ep_1.
\end{equation}

Applying a similar reasoning in a neighborhood of $q$ (and reducing
$\delta$, if necessary), we find a point $z_q \in W^s_{loc}(q,
m)\setminus B(m/2, q)$ and a number $T'' < 0$ such that
$\dist(\phi(T'', x_q), z_q) < \ep_1$.

Consider hypersurfaces $S_p := \{ x_1 = P_1 y_p \}$, $S_q := \{x_1 =
P_1 y_q\}$. Let us note that Poicare map ${Q: S_q \to S_p}$ is
linear, defined by $Q(x) = \phi(T, x)$ and satisfy
$Q(\{x_2, x_4 = 0\}) \ne \{x_3, x_4 = 0\}$.
Choose $R > 0$ from Lemma \ref{lemCyl}, applied to hypersurface $S_p$, $S_q$, mapping $Q$ and $D = \ep/8$. Note that for some $T_R > 0$ hold the
inequalities
\begin{equation} \label{Cyl4.1.1}\notag
|\phi(t, P_{34}x_p)| < R, \; t < -T_R; \quad  |\phi(t, P_{24}x_q)| < R, \; t > T_R.
\end{equation}
Consider the sets
$$
\Sp^- = \{ \phi(t, P_{34}x_p), \; t < -T_R \}; \quad \Sp^+ = \{ \phi(t, P_{24}x_q), \; t > T_R \}.
$$
Due to Lemma \ref{lem2d} item (ii) sets $\Sp^{\pm}$ satisfy assumptions of Lemma \ref{lemCyl} and hence the sets
$$
C^- = \{x \in S_p: \quad P_{34}x \in \Sp^-, |P_2 x| < D \},
$$
$$
C^+ = \{x \in S_q: \quad P_{24}x \in \Sp^+, |P_3 x| < D\}
$$
satisfy $C^- \cap QC^+ \ne \emptyset$. Let us consider a point
\begin{equation}\label{Text9.0.5}
x_0 \in C^- \cap Q C^+
\end{equation}
and $t_p < -T_R$, $t_q > T_R$, such that
$
P_{34} x_0 = \phi(t_p, P_{34}x_s)$,
$P_{24} Q^{-1}x_0 = \phi(t_q, P_{24}x_u)$.
The following inclusions hold
$$
\phi(-T_Q - T_R - T'', x_0) \in B(2\ep_1, z_q); \quad \phi(-T_Q, x_0) \in B(D,
y_q);
$$
$$
\phi(0, x_0) \in B(D, y_p); \quad \phi(T_R+ T', x_0) \in B(2\ep_1,
z_p).
$$

Inequalities \sref{Pst1.4.1} imply that if $\delta$ is small enough,
then
\begin{equation}\label{Pst1.6.5}
\dist(\phi(t_3 + t, x_0), g(T_p + t)) < \ep/2, \quad t \in [-T, 0].
\end{equation}
Define a reparametrization $h(t)$ as follows:
$$
h(t) = \begin{cases}
h(T_q + T'' + t) = -T_Q-T_R - T'' + t, & t<0,\\
h(T_p + T' + t) = T_R + T' + t, & t>0, \\
h(T_p + t) = t, & t \in [-T, 0], \\
h(t) \; \mbox{increases}, & t \in [T_p, T_p +T'] \cup [T_q + T'',
T_q].
\end{cases}
$$
If $t \geq T_p + T'$, then inequality \sref{Pst1.5.1} implies that
$$
\dist(\phi(h(t), x_0), \phi(t - (T_p + T'), z_p)) < \ep/4;
$$
$$
\dist(\phi(t - T_p, x_p), \phi(t - (T_p + T'), z_p)) < \ep/4.
$$
Hence, if $t \geq T_p + T'$, then
\begin{equation}\label{Pst1.7.1}
\dist(\phi(h(t), x_0), g(t)) < \ep/2.
\end{equation}
For $t \in [T_p, T_p + T']$
the inclusions $\phi(h(t), x_0), g(t) \in B(m, p)$ hold, and
inequality \sref{Pst1.7.1} holds for these $t$ as well.

A similar reasoning shows that inequality \sref{Pst1.7.1} holds for
$t \leq T_q$. If $t \in [T_q, T_p]$, then inequality \sref{Pst1.7.1}
follows from \sref{Pst1.6.5}. This completes the proof in case (P1).
\medskip

Case (P2): $x_p \in \Ws(p)$ and $x_q \notin \Wu(q)$. In this case
the proof uses the same reasoning as in case (P1).
The only difference is that instead of \sref{Text9.0.5} we construct a point $x_0 \in B(D, y_p) \cap W^s_{loc}(p, m)$ such that
$$
\phi(-T - T'', x_0) \in B(2\ep_1, z_q); \quad \phi(-T, x_0) \in B(\ep/8,
y_q).
$$
The construction is straightforward and uses Lemma \ref{lem2d}, item (ii).

\medskip

Case (P3): $x_p \notin \Ws(p)$ and $x_q \in \Wu(q)$. This case is
similar to case (P2).
\medskip

Case (P4): $x_p \in \Ws(p)$ and $x_q \in \Wu(q)$. In this case, we
take $\al$ as the shadowing trajectory; the reparametrization is
constructed similarly to case (P1).
\end{proof}

\begin{remark}\label{remProb}
Proposition \ref{prop3} can be easily generalised in order to prove that $X$ satisfies Lipschitz Oriented Shadowing property. Surprisingly we do not know how to prove Lipschitz analog of Proposition \ref{prop2}.
\end{remark}

\section{Standard Shadowing Property} \label{secst}
Let us show that for small enough $\ep < \min(l, \Delta/2)$ for any $d > 0$ there exists $d$-pseudotrajectory $g(t)$,
which cannot be $\ep$-shadowed.

Put $a_p = p+ (1, 0, 0, 0)$, $a_q = q - (1, 0, 0, 0)$, $e_p
= (0, 0, 0, 1)$ and $e_q = (0, 0, 0, -1)$.
For any $d> 0$ consider pseudotrajectory
$$
g(t) =
\begin{cases}
\phi(t, a_p+ d e_p), & \quad t \geq 0, \\
\phi(t+T_a, a_q + d  e_q), & \quad t \leq -T_a, \\
\phi(t, a_p), & \quad t \in (-T_a, 0).
\end{cases}
$$
Note that for some $L_0 > 0$ map $g(t)$ is $L_0 d$ pseudotrajectory.
Assume that for some $x_0 \in S^2 \times S^2$ and $h(t) \in \Rep(\ep)$
hold the inequalities \sref{eqsh}.
Without loss of generality we can assume that $h(0) = 0$.
Let us consider sets
$$
S_p = \{(1, x_2, x_3, x_4): \; |x_2|, |x_3|,
|x_4| < \Delta\} \subset U_p;
$$
$$
S_q = \{(-1, x_2, x_3, x_4): \; |x_2|, |x_3|,
|x_4| < \Delta\} \subset U_q.
$$
Inequalities \sref{eqsh} imply that $\dist(x_0, a_p + d e_p) <
\ep$, and $\dist(\phi(h(-T_a), x_0), a_q + d e_q) < \ep$. Hence
there exists $L_1 > 0$ and $H_p, H_q \in [-L_1 \ep, L_1 \ep]$ such that points $x_p = \phi(H_p, x_0)$ and $x_q = \phi(h(-T_a) + H_q, x_0)$ satisfy
inclusions $x_p \in S_p$, $x_q \in S_q$.

Inequality \sref{eqsh} implies that for some $L_2 > 0$ the following holds
\begin{equation}\notag
|x_p - a_p|, |x_q - a_q| < L_2 \ep;
\end{equation}
\begin{equation}\label{Text29.1}
\dist(\phi(h(t), x_p), g(t)) < L_2 \ep , \quad t > 0;
\end{equation}
\begin{equation}\notag
\dist(\phi(h(t) - h(-T_a), x_q), g(t)) < L_2 \ep, \quad t \leq -T_a.
\end{equation}
Note that introduced above flow $\psi$
satisfies $\psi(t, (x_3, x_4)) = P_{34}\phi(t, (0, 0, x_3, x_4))$. Hence
inequalities \sref{Text29.1} imply the following
\begin{equation}\label{Text30.1}\notag
\dist(\psi(h(t), P_{34}x_p), \psi(t, (0, d))) < L_2 \ep, \quad
t >0.
\end{equation}
Let us choose $\ep > 0$ satisfying the inequality $L_2 \ep <
l$. Lemma \ref{lem2d} imply that $P_4 x_p > 0$.
Similarly $P_4 x_q < 0$. This contradicts to the
equality $x_p = \phi(T_a,
x_q)$ and (F5). Hence $X \notin \StSh$.

\section{Appendix}
\subsection{Proof of Lemma \ref{lem2d}}
Note that
\begin{equation}\label{31.4}
\psi(t, (r, \varphi)) = (e^{at}r, \varphi+\int_0^t b(e^{a\tau} r)
\dd \tau).
\end{equation}
Item (i). Let us show that $Y \in \Cone(\RR^2)$.
Since $b(r) \in \Cone(0, +\infty)$, it is enough to prove continuity of $D Y(x)$ at $x = 0$. Assume that $\sqrt{x_1^2 + x_2^2} < 2l$.
The following holds:
$$
b'(r) = \frac{1}{r \ln^2 r}, \quad r \in (0, 2l);
$$
$$
\frac{\partial Y_1}{\partial x_1} = a + b'\left(\sqrt{x_1^2 +
x_2^2}\right)\frac{x_1 x_2}{\sqrt{x_1^2 + x_2^2}}; \quad \frac{\partial Y_1}{\partial x_2} =  b'\left(\sqrt{x_1^2 +
x_2^2}\right)\frac{ x^2_2}{\sqrt{x_1^2 + x_2^2}}.
$$
Since
$$
\frac{|x_1 x_2|}{\sqrt{x_1^2 + x_2^2}}, \; \frac{x_2^2}{\sqrt{x_1^2
+ x_2^2}} < \sqrt{x_1^2 + x_2^2}
$$
and $r b'(r) \to 0$ as $r \to 0$, the following holds
$$
\lim_{|x| \to 0}\frac{\partial Y_1}{\partial x_1}(x) = a, \quad \lim_{|x| \to 0} \frac{\partial
Y_1}{\partial x_2}(x) = 0
$$
Arguing similarly for $\frac{\partial Y_2}{\partial
x_1}, \frac{\partial Y_2}{\partial x_2}$ we conclude that
$$
\lim_{|x| \to 0} D Y (x) = \left(
                             \begin{array}{cc}
                               a & 0 \\
                               0 & a \\
                             \end{array}
                           \right).
$$
Note that
$$
\left|
Y(x) - \left(
                             \begin{array}{cc}
                               a & 0 \\
                               0 & a \\
                             \end{array}
                           \right) x \right| =
\left| \left(
                             \begin{array}{cc}
                               0 & b(|x|) \\
                               b(|x|) & a \\
                             \end{array}
                           \right) x
 \right| \leq \frac{|x|}{|\ln(|x|)|},
$$
which implies that
$$
D Y (0) = \left(
                             \begin{array}{cc}
                               a & 0 \\
                               0 & a \\
                             \end{array}
                           \right).
$$
and completes the proof of item (i).

\medskip
Item (ii).
By the equality \sref{31.4} it is enough to show that for $r> 0$, $T_0 < 0$ holds the inequality
\begin{equation}\label{Add2.1}\notag
\int_{-\infty}^{T_0} b(e^{a\tau}r)\dd \tau > 2\pi.
\end{equation}
Without loss of generality we can assume that $r< 2l$. The following holds
$$
\int_{-\infty}^{T_0} b(e^{a\tau}r)\dd \tau = \int_{-\infty}^{T_0}
-\frac{1}{a\tau + \ln r}\dd \tau = \left.-\frac{1}{a} \ln (|a\tau +
\ln r|)\right|_{-\infty}^{T_0} = + \infty.
$$
Item (ii) is proved.

\medskip

Item (iii). Fix $a> 0$. Let $x_0 = (r_0, \varphi_0)$, $x_1 = (r_1, \varphi_1)$ and ${h(t)
\in \Rep(l)}$ satisfy assumptions of the lemma. Let us show that for large enough $K$ and small $l$ holds the inequality
${|\varphi_0 - \varphi_1| < \pi/4}$. Reducing $l$ assume that $K l < 1$. Let us consider $T > 0$ and $\Delta \in \RR$ such that
\begin{equation} \label{31.3.5}
e^{aT}r_0 = Kl, \quad e^{a\Delta}r_0 = r_1.
\end{equation}

Consider points $x_2 = \psi(T, x_0) = (r_2, \varphi_2)$ è $x_3 = \psi(h(T), x_1) = (r_3, \varphi_3)$. Note that $r_2 = Kl$. Inequality
\sref{eq2dstsh} implies
\begin{equation}\label{Text3.1}
\dist(x_2, x_3) < l
\end{equation}
and hence $r_3 \in [(K-1)l, (K+1)l]$. Equality \sref{31.4} implies that
$$
r_3 = e^{ah(T)}r_1, \quad \varphi_2 = \varphi_0 + \int_0^{T}b(e^{a\tau}r_0)\dd \tau, \quad \varphi_3 = \varphi_1 + \int_0^{h(T)}b(e^{a\tau}r_1)\dd \tau.
$$

Relations \sref{eq2dstsh} and \sref{31.3.5} implies
\begin{equation}\label{32.1}
\frac{K}{K+1}e^{a(h(T)+\Delta)}r_0 = \frac{K}{K+1}e^{ah(T)}r_1 < e^{aT}r_0<\frac{K}{K-1}e^{ah(T)}r_1 = \frac{K}{K-1}e^{a(h(T) +
\Delta)}r_0.
\end{equation}
For large enough $K$ inequality \sref{Text3.1} implies that
\begin{equation}\label{32.1.4}
|\varphi_2 - \varphi_3| < \pi/8.
\end{equation}
The following holds
\begin{multline}\label{32.1.5}
\varphi_2 - \varphi_3 = (\varphi_0 - \varphi_1) + \int_0^T
b(e^{a\tau}r_0)\dd \tau - \int_0^{h(T)}b(e^{a\tau}r_1) \dd \tau = \\
= (\varphi_0 - \varphi_1) + \int_0^T b(e^{a\tau}r_0)\dd \tau -
\int_\Delta^{h(T) + \Delta} b(e^{a\tau}r_0)\dd \tau = \\ =
(\varphi_0 - \varphi_1) + \int_0^\Delta b(e^{a\tau}r_0)\dd \tau -
\int_T^{h(T) + \Delta} b(e^{a\tau}r_0)\dd \tau.
\end{multline}
Relations \sref{eq2dstsh} and \sref{31.3.5} imply that
$e^{a(h(T) + \Delta)}r_0 = e^{ah(T)}r_1 > (K-1) l$ and hence
\begin{equation}\label{32.2}
b(e^{a\tau}r_0) = 0, \quad \tau \in [T, h(T) + \Delta].
\end{equation}
Relations \sref{32.1} imply inequalities
$$
\ln \frac{K}{K+1} + a (h(T) + \Delta) < aT< \ln \frac{K}{K-1} + a
(h(T) + \Delta),
$$
and hence
$$
(T - h(T)) - \frac{1}{a} \ln \frac{K}{K-1}< \Delta < (T - h(T)) -
\frac{1}{a}\ln \frac{K}{K+1}.
$$
Since $h(t) \in \Rep(l)$ and $T = (\ln (Kl) - \ln r_0)/a$, holds the inequalities
$$
\frac{1}{a}\left(-l| \ln (K l) - \ln r_0| - \ln
\frac{K}{K-1}\right) < \Delta  < \frac{1}{a}\left(l| \ln (K l)
- \ln r_0| - \ln \frac{K}{K+1}\right).
$$
and hence for large enough $K$ and small $l$ the following holds
\begin{equation}\label{33.1}
|\Delta|< \frac{1}{a}\left(\frac{4}{K-1} - l \ln r_0\right).
\end{equation}
Since $|r_0|< l$, decreasing $l$ and enlarging
$K$ we can assume that $|\Delta|<
\frac{1}{a}\ln 2$. Then for $\tau \in [0, \Delta]$ holds the inequality $e^{a\tau}r_0< 2l$, hence $b(e^{a\tau}r_0) =
1/\ln(e^{a\tau}r_0)$. For small enough $l$ inequality
\sref{33.1} implies that $a|\Delta| < -(\ln r_0)/2$, which implies
$|b(e^{a\tau}r_0)|<2 b(r_0) = -2 / \ln r_0$ and
hence
$$
\left|\int_0^{\Delta} b(e^{a\tau}r_0)\dd \tau\right| < - \frac{2
|\Delta|}{\ln r_0} <  - \frac{2}{a}\left(\frac{4}{K-1} -
l \ln r_0\right)\frac{1}{\ln r_0} < \frac{2}{a}\left(l - \frac{4}{(K-1)\ln l}\right).
$$
For small enough $l$ right hand side of the expression is less than
$\pi/8$. Combining this with relations
\sref{32.1.5}, \sref{32.2} we conclude that
$$
\left|(\varphi_2 - \varphi_3)-(\varphi_0 - \varphi_1)\right| <
\pi/8.
$$
and hence \sref{32.1.4} implies $|\varphi_0 -
\varphi_1| < \pi/4$. Item (iii) is proved.

\subsection{Proof of Lemma \ref{lemCyl}}
Let us fix a linear map $Q$ and a number $D > 0$.
Consider the lines $l_1  \subset S_1$, $l_2 \subset S_2$ defined by ${x_2 =
x_3 =0}$,  $y_2 =
y_3 = 0$ respectively. Note that $Q l_2 \ne l_1$. Let us consider surface $V \subset S_1$ containing $l_1$ and $Ql_2$. Consider a parralelogram $P \subset V$, simmetric with respect to 0 with sides parralel to $l_1$ and $Q
l_2$, satisfying the relation
\begin{equation}\label{Cyl2.1}
P \subset \{|x_1| < D\} \cap Q(\{|y_1|<D\}).
\end{equation}
Let us choose $R>0$, such that the following inclusions hold
\begin{equation}\label{Cyl2.2}
B(R, 0) \cap V \subset P \quad \mbox{and} \quad Q(B(R, 0) \cap
Q^{-1}V) \subset P.
\end{equation}
Let $z_1$ be a point of intersection $\Sp_1$ and the line ${V \cap
\{x_1 = 0\}}$. Condition \sref{Cyl2.2} implies that $z_1 \in P$.
Consider the line $k_1$, containing $z_1$ and parallel to $l_1$.
Inclusion \sref{Cyl2.1} implies that $k_1 \cap P \subset \Cyl_1$.

Similarly let $z_2$ be a point of intersection of $\Sp_2$ and
${V \cap \{y_1 = 0\}}$. Condition \sref{Cyl2.2} implies the inclusion
$Qz_2 \in P$. Let $k_2$ be the line containing $Qz_2$
and parallel to $Ql_2$. Inclusion \sref{Cyl2.1} implies that
$Q^{-1}(k_2 \cap V) \subset \Cyl_2$.

Since $k_1 \nparallel k_2$, there exists a point $z \in k_1 \cap
k_2$. Inclusions ${z_1, z_2 \in P}$ imply that $z \in
P$. Hence $z \in \Cyl_1 \cap Q \Cyl_2$. Lemma
\ref{lemCyl} is proved.


\begin{thebibliography}{99}

\bibitem{Ano} D.V. Anosov, On a class of invariant sets of smooth dynamical
systems, {\it Proc. 5th Int. Conf. on Nonlin. Oscill.}, vol. 2,
Kiev, 1970, 39-45.

\bibitem{Bow} R. Bowen, {\it Equilibrium States and the Ergodic Theory of Anosov
Diffeomorphisms}, Lecture Notes in Math., vol. 470, Springer,
Berlin, 1975.

\bibitem{GanLiTikh} S. Gan,  M. Li, S. Tikhomirov. Oriented shadowing property and $\Omega$-stability for vector fields. preprint

\bibitem{Katok}
A. Katok, B. Hasselblatt,
{\it Introduction to the modern theory of dynamical systems. Encyclopedia of Mathematics and its Applications}, 54. Cambridge University Press, Cambridge, 1995.

\bibitem{Kom} M. Komuro, One-parameter flows with the pseudo orbit tracing
property, {\it Monat. Math.} {\bf 98} (1984), 219-253.

\bibitem{LeeSak} K. Lee, K. Sakai, Structural stability of vector fields with shadowing, {\it J.
Differential Equations}, {\bf 232} (2007), 303-313.

\bibitem{PalmBook} K. J. Palmer, {\it Shadowing in Dynamical Systems: Theory and Applications}, Kluwer, 2000.

\bibitem{PalmPilTikh}  K. J. Palmer, S. Yu. Pilyugin, S. B. Tikhomirov, Lipschitz
shadowing and structural stability of flows, {\it J. Differential Equations}, {\bf 252} (2012), 1723-1747.

\bibitem{PilBook} S. Yu. Pilyugin, {\it Shadowing in Dynamical Systems}, Lecture Notes in
Math., vol. 1706, Springer, 1999.

\bibitem{PilSSBook} S. Yu. Pilyugin, {\it Introduction to Structurally Stable Systems of Differential
Equations}, Birkhauser-Verlag, Basel, 1992.

\bibitem{PilFlow} S. Yu. Pilyugin, Shadowing in structurally stable
flows, {\it J. Diff. Eqns.}, 140,  no. 2 (1997) 238-265.


\bibitem{PilRev} Pilyugin, S. Yu.
Theory of shadowing pseudotrajectories in dynamical systems. Differ. Uravn. Protsessy Upr. 2011, no. 4, 96–112.


\bibitem{PilTikh}
S. Yu. Pilyugin, S. B. Tikhomirov, Vector fields with the
oriented shadowing property, {\it J. Differential Equations}, {\bf 248} (2010), 1345-1375.

\bibitem{PilTikhDAN} S. Yu. Pilyugin, S. B. Tikhomirov, Sets of vector fields
with various shadowing properties of pseudotrajectories, {\it Doklady
Mathematics}, {\bf 422} (2008), 30-31.

\bibitem{Rob} C. Robinson, Stability theorems and hyperbolicity in dynamical
systems, {\it Rocky Mountain J. Math.} {\bf 7} (1977), 425-437.

\bibitem{Saw} K. Sawada, Extended $f$-orbits are approximated by orbits, {\it Nagoya Math. J.} {\bf 79} (1980), 33-45.

\bibitem{Tho} R. F. Thomas, Stability properties of one-parameter
flows, {\it Proc. London Math. Soc.}, {\bf 54} (1982), 479-505.

\bibitem{TikhVest}
S.B. Tikhomirov,  Interiors of sets of vector fields with shadowing
properties that correspond to some classes of reparametrizations,
{\it Vestnik St. Petersburg Univ. Math.}, {\bf 41},  no. 4 (2008), 360-366.

\end{thebibliography}
\end{document}